% ----------------------------------------------------------------
% AMS-LaTeX Paper ************************************************
% **** -----------------------------------------------------------
\documentclass{amsart}
\usepackage{amssymb, amsmath, amsfonts, mathrsfs}
\usepackage{todonotes}
% ----------------------------------------------------------------
\vfuzz2pt % Don't report over-full v-boxes if over-edge is small
\hfuzz2pt % Don't report over-full h-boxes if over-edge is small
% THEOREMS -------------------------------------------------------
\newtheorem{thm}{Theorem}[section]

\newtheorem{lem}[thm]{Lemma}
\newtheorem{prop}[thm]{Proposition}
\theoremstyle{definition}

\theoremstyle{remark}
\newtheorem{rem}[thm]{Remark}

\newtheorem{ex}[thm]{Example}
\numberwithin{equation}{section}
% MATH -----------------------------------------------------------

% ----------------------------------------------------------------
\begin{document}
\title{A note on infinite series with recursively defined terms}
\markright{Recursive series}
\author{Tam\'as Forg\'acs, Jack Luong and Joshua Williamson}

\maketitle

\begin{abstract}
In this note we study the convergence of recursively defined infinite series. We explore the role of the derivative of the defining function at the origin (if it exists), and develop a comparison test for such series which can be used even if the defining function of the series is not differentiable.
\end{abstract}

\section{Introduction}
In a recent paper in the {\it Monthly} (see \cite{efs}) the authors study the convergence properties of infinite series 
\begin{equation} \label{eq:generalseries}
\sum_{k=0}^{\infty} f^n(x_0), \qquad (f^n=\underbrace{f \circ f \circ \cdots \circ f}_{\textrm{$n$ times}}, \quad f^0=\operatorname{Id}),
\end{equation}
 for a certain class of real functions $f$. Their main result is the following
 \begin{prop}\label{prop:efsmain} (\cite[Proposition 1,\,p.360]{efs}) Consider the differentiable function $f:(0,\infty) \to (0,\infty)$ with the property that $f(x)<x$ for all $x \in (0,\infty)$, and the sequence with the properties:
\begin{itemize}
\item[(1)] $\lim_{n \to \infty}x_n=0$, with $x_n>0$ for all $n \in \mathbb{N}$;
\item[(2)] $x_{n+1}=f(x_n)$;
\item[(3)] the limit
\[
\lim_{x \to 0} \frac{x^a-f^a(x)}{x^af^a(x)}
\]
exists and equals $\dfrac{1}{k^a}$ for some $a,k \in (0,\infty)$.
\end{itemize}
Then:
\begin{itemize}
\item[(i)] the limit $\lim_{n \to \infty} n^{\frac{1}{a}}x_n$ exists and equals $k$;
\item[(ii)] the series $\sum_{n=1}^{\infty} x_n$ diverges if $a \geq 1$
\item[(iii)] the series $\sum_{n=  1}^{\infty} x_n$ converges if $a<1$.
\end{itemize}
 \end{prop}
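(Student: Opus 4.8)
The plan is to linearize the nonlinear recursion $x_{n+1}=f(x_n)$ by passing to reciprocal $a$-th powers. Set $y_n = 1/x_n^a$. Since $x_n \to 0$ by hypothesis (1), we have $y_n \to +\infty$, and because $x_{n+1}=f(x_n)$, the consecutive increment of $y_n$ is exactly the quantity appearing in hypothesis (3):
\[
y_{n+1}-y_n = \frac{1}{f^a(x_n)}-\frac{1}{x_n^a} = \frac{x_n^a - f^a(x_n)}{x_n^a\, f^a(x_n)}.
\]
As $n\to\infty$ we have $x_n\to 0$, so hypothesis (3), evaluated along this sequence, yields $y_{n+1}-y_n \to 1/k^a$.

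First I would establish (i). Taking $b_n = n$, which is strictly increasing to $+\infty$, and $a_n = y_n$, the Stolz--Ces\`aro theorem applies because $(a_{n+1}-a_n)/(b_{n+1}-b_n) = y_{n+1}-y_n \to 1/k^a$. It gives $y_n/n \to 1/k^a$, that is, $n\,x_n^a \to k^a$. Since all quantities are positive, taking $a$-th roots yields $n^{1/a}x_n \to k$, which is precisely (i).

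With the sharp asymptotic $x_n \sim k\, n^{-1/a}$ now in hand, parts (ii) and (iii) follow from the limit comparison test against the $p$-series $\sum n^{-1/a}$. Indeed $\lim_{n\to\infty} x_n / n^{-1/a} = k \in (0,\infty)$, so $\sum x_n$ and $\sum n^{-1/a}$ exhibit the same convergence behavior. The $p$-series converges exactly when $1/a > 1$, i.e. when $a<1$, giving (iii), and diverges when $1/a \le 1$, i.e. when $a \ge 1$, giving (ii).

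The main obstacle is conceptual rather than computational: one must recognize that hypothesis (3) has been engineered to be exactly the limit of the increments $y_{n+1}-y_n$ of the transformed sequence $y_n = 1/x_n^a$. Once this telescoping structure is spotted, the remainder is routine. The points requiring care are verifying that the Stolz--Ces\`aro hypotheses hold --- in particular that $y_n \to +\infty$, which is where condition (1) enters --- and confirming that the limit in (3), stated as $x\to 0$, may legitimately be evaluated along the sequence $x_n \to 0$. Note that the differentiability of $f$ plays no direct role in this argument; only the existence of the limit in (3) is used.
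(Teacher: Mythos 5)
Your proof is correct: the substitution $y_n = 1/x_n^a$ converts hypothesis (3) into the statement that the increments $y_{n+1}-y_n$ tend to $1/k^a$, and Stolz--Ces\`aro followed by limit comparison with $\sum n^{-1/a}$ gives (i)--(iii) exactly as you describe. The paper states this proposition without proof, citing \cite{efs}, and your argument is the standard one for this result, so there is nothing substantive to compare; the only quibble is that Stolz--Ces\`aro in the form you invoke needs only $b_n=n\to\infty$, not $y_n\to\infty$, and the real role of condition (1) is to let you evaluate the limit in (3) along the sequence $x_n$.
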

 The purpose of this note is twofold. We first investigate the class of functions $\mathscr{F}$ to which Proposition \ref{prop:efsmain} applies, then we proceed to study the convergence properties of (\ref{eq:generalseries}) for $f \notin \mathscr{F}$. Our main result (Theorem \ref{thm:monotonemajorant}) extends the scope of Proposition \ref{prop:efsmain} to functions majorized by a monotone function. Along the way we also completely classify (partly using Proposition \ref{prop:efsmain}) those funtions which are differentiable at the origin and generate convergent series as in equation (\ref{eq:generalseries}).  The second goal is to investigate the ways in which the established results carry over to functions that are not necessarily positive, and whose domain is the real line. \\
 
 Before we get to the heart of the discussion, we note that the assumption of property (1) in the statement of Proposition \ref{prop:efsmain} is superfluous. Indeed, if $f:(0,\infty) \to (0,\infty)$ satisfies $f(x)<x$ for all $x \in (0,\infty)$, and $x_{n+1}=f(x_n)$, then the sequence $\left\{x_n \right\}_{n=0}^{\infty}$ is monotone decreasing and bounded below by zero. Were the $x_n$s to converge to $L>0$, the continuity of $f$ would imply $f(x_n) \to f(L)<L$, but of course $\lim f(x_n)=\lim x_n=L$. Thus property (1) follows from the assumptions on $f$ and property (2). In addition, we also see that extending $f$ to $[0,\infty)$ by setting $f(0)=0$ yields a continuous function. There is no way to guarantee however, that the extension is differentiable at $x=0$. In the interest of simplifying notation throughout the paper limits are not indicated as one sided even when they clearly are, and we also do not explicitly concern ourselves with the value $x_0$, beyond requiring that it seeds a sequence which converges to zero.
  \section{The limit in property (3)} In this section we show that if $f$ is in fact differentiable at zero, then $f'(0)=1$ must hold in order for the limit to exist (c.f. closing remarks in \cite{efs} discussing Applications 1, 2, 5 and 6). We then provide a proof of the fact that if $f$ is real analytic on a neighborhood of the origin with $f'(0)=1$, and $x_0$ is chosen in the domain of analyticity,  the resulting infinite series never converges. This result handles all but two of the Applications in \cite{efs} at once in terms of concluding that the series in question diverges. 
\newline Recall that $f$ as in the statement of Proposition \ref{prop:efsmain} always has a continuous extension to $[0,\infty)$, but not necessarily a differentiable one. 
 \begin{ex}\label{ex:nondiff} Let $f:(0,\infty) \to (0, \infty)$ defined by $\displaystyle{f(x)=x\left(\frac{1}{2}+\frac{1}{3}\sin\left(\frac{1}{x} \right) \right)}$ and extend it to the origin by $f(0)=0$. Then $f(x)<x$ for all $x>0$, but 
 \[
 \lim_{x \to 0} \frac{x\left(\frac{1}{2}+\frac{1}{3}\sin\left(\frac{1}{x} \right) \right)-0}{x-0}=\lim_{x \to 0} \left(\frac{1}{2}+\frac{1}{3}\sin\left(\frac{1}{x} \right) \right)=D.N.E,
 \]
 and hence the unique continuous extension to zero is non-differentiable there.
 \end{ex} 
 \begin{prop}\label{prop:f'(0)} Suppose that $I$ is an interval containing the origin, that $f:I \to \mathbb{R}$ satisfies $0<f(x)<x$ for $x>0$, and suppose that $f$ is differentiable at $x=0$. The limit
\begin{equation*} \label{eq:limit2}
\lim_{x \to 0} \frac{x^a - f(x)^a}{x^af(x)^a}
\end{equation*}
exists for some $a>0$ if and only if $f'(0) = 1$.
\end{prop}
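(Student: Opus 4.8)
The plan is to reduce the statement to a single algebraic identity and then treat the two implications separately. Two preliminary observations from the hypotheses drive everything. First, since $0<f(x)<x$ forces $f(x)\to 0$ as $x\to 0^+$, the differentiable extension has $f(0)=0$ and
\[
f'(0)=\lim_{x\to 0^+}\frac{f(x)-f(0)}{x}=\lim_{x\to 0^+}\frac{f(x)}{x}=:c,
\]
where $c\in[0,1]$ because $0<f(x)/x<1$. Second, writing $r(x):=f(x)/x$, I would record the identity
\[
\frac{x^a-f(x)^a}{x^af(x)^a}=\frac{1}{f(x)^a}-\frac{1}{x^a}=\frac{1}{x^a}\cdot\frac{1-r(x)^a}{r(x)^a},
\]
which displays the limit as a competition between the blow-up $x^{-a}\to\infty$ and the behavior of the factor $1-r(x)^a$.

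For the ``only if'' direction I would argue by contradiction: assume the limit exists and is finite for some $a>0$ but $c<1$. Then $r(x)\to c$ gives $\frac{1-r(x)^a}{r(x)^a}\to\frac{1-c^a}{c^a}\in(0,\infty]$, so this factor stays above some positive constant $m$ near the origin, whence the whole expression exceeds $m\,x^{-a}\to+\infty$, contradicting finiteness. Hence $c=f'(0)=1$, and I expect this half to be routine.

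The ``if'' direction is, I expect, the genuine obstacle. When $c=1$ we face $\frac{1-r(x)^a}{r(x)^a}\to 0$ against $x^{-a}\to\infty$, an indeterminate form. Setting $g(x):=x-f(x)>0$ (so that $g(x)/x\to 0$) and using $(1-g/x)^{-a}=1+a\,g(x)/x+O\bigl((g/x)^2\bigr)$, the quantity becomes
\[
\frac{1}{f(x)^a}-\frac{1}{x^a}=\frac{a\,g(x)}{x^{a+1}}\,\bigl(1+o(1)\bigr),
\]
so a finite limit exists for some $a$ exactly when $\lim_{x\to 0}g(x)/x^{a+1}$ is finite for some $a$. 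The difficulty is that differentiability at the origin fixes only the first-order datum $f'(0)=1$ and leaves the decay rate of the defect $g$ entirely free: if $g$ vanishes more slowly than every power of $x$ --- for instance $g(x)=x/\log(1/x)$, for which $g(x)/x^{a+1}\to+\infty$ for every $a>0$ --- then no admissible $a$ works. I would therefore expect the clean converse to require a supplementary condition controlling $g$ (for example that $g(x)$ is of order $x^{1+b}$, in which case $a=b$ yields a finite positive limit), and I would flag the unrestricted ``if'' direction as the delicate point to reconcile against the authors' precise formulation.
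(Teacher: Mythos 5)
Your ``only if'' argument is essentially the paper's own proof: the authors write $f(x)=f'(0)x+\varepsilon(x,0)$ with $\varepsilon(x,0)/x\to 0$, factor $x^a$ out of numerator and denominator to obtain
\[
\frac{x^a-f(x)^a}{x^af(x)^a}=\frac{1-\bigl(f'(0)+\varepsilon(x,0)/x\bigr)^a}{x^a\bigl(f'(0)+\varepsilon(x,0)/x\bigr)^a},
\]
and note that since the denominator tends to $0$, a finite limit forces the numerator to tend to $0$, i.e.\ $f'(0)=1$. Your version with $r(x)=f(x)/x\to c$ is the same computation, phrased as the product $x^{-a}\cdot\frac{1-r(x)^a}{r(x)^a}$, and it is correct.

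More importantly, your reservation about the ``if'' half is justified, and you have in effect found a gap in the paper's own proof. The paper declares ``it is now clear that the limit exists if and only if $f'(0)=1$'' but then only substantiates the necessity of $f'(0)=1$; no argument for sufficiency is given, and none can be, because sufficiency is false as stated. Your example works: take $f(x)=x\bigl(1-1/\log(1/x)\bigr)$ for $0<x<e^{-2}$ and $f(x)=x$ for $x\le 0$. Then $0<f(x)<x$ for $x>0$, $f$ is differentiable at $0$ with $f'(0)=1$, and for every $a>0$
\[
\frac{x^a-f(x)^a}{x^af(x)^a}\sim\frac{a/\log(1/x)}{x^a}\longrightarrow+\infty \qquad (x\to 0^+),
\]
so the limit exists finitely for no $a>0$. (One cannot rescue the biconditional by allowing the value $+\infty$, since then the limit would also ``exist'' whenever $f'(0)<1$, destroying the other direction.) The claim that is actually true, and the only one the authors use later --- see the opening sentence of the section, ``$f'(0)=1$ must hold in order for the limit to exist'' --- is the one-directional implication that existence of the limit for some $a>0$ forces $f'(0)=1$. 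Your proposal proves exactly that implication and correctly identifies that a genuine converse would need a supplementary hypothesis controlling the decay of $x-f(x)$, e.g.\ that it is of exact order $x^{1+a}$.
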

\begin{proof}
We may write $f$ as
\begin{equation*}
f(x) =f'(0)x+\varepsilon(x,0),
\end{equation*}
where $\lim_{x \to 0} \frac{\varepsilon(x,0)}{x}=0$. It follows that
\begin{eqnarray*}
\lim_{x \to 0} \frac{x^a - f(x)^a}{x^af(x)^a} &=& \lim_{x \to 0} \frac{x^a - (f'(0)x+\varepsilon(x,0))^a}{x^a(f'(0)x+\varepsilon(x,0))^a}\\
&=& \lim_{x \to 0} \frac{x^a - x^a(f'(0)+\varepsilon(x,0)/x)^a}{x^{2a}(f'(0)+\varepsilon(x,0)/x)^a}\\
&=& \lim_{x \to 0} \frac{1-(f'(0)+\varepsilon(x,0)/x)^a}{x^a(f'(0)+\varepsilon(x,0)/x)^a}
\end{eqnarray*}
It is now clear that the limit exists if and only if $f'(0)=1$. More explicitly, since
\[
\lim_{x \to 0}x^a(f'(0)+\varepsilon(x,0)/x)^a=0,
\]
in order for the limit to exist, we must also have
\[
\lim_{x \to 0}1-(f'(0)+\varepsilon(x,0)/x)^a=0.
\]
This in turn implies that $f'(0)=1$.
 \end{proof}
 \begin{thm} Suppose that $f$ is real analytic on a neighborhood of the origin, and that it also satisfies the remaing assumptions of Proposition \ref{prop:efsmain}. Then the series generated by $f$ is divergent.
 \end{thm}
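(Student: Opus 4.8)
The plan is to use real analyticity to extract the leading behavior of $f$ near the origin, then match that expansion against the limit in property (3) to see that the exponent $a$ is forced to be at least $1$, whereupon the divergence follows immediately from part (ii) of Proposition \ref{prop:efsmain}. The strategy is thus reductive: I never need to analyze the series directly, only to locate the value of $a$.

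First I would observe that since $f$ is real analytic near $0$ it is in particular differentiable there, and the limit in property (3) exists by hypothesis; Proposition \ref{prop:f'(0)} then gives $f'(0)=1$. Because the continuous extension satisfies $f(0)=0$ and $f'(0)=1$, the analytic function $g(x):=f(x)-x$ vanishes to order at least $2$ at the origin. It cannot vanish identically, for otherwise analyticity would force $f(x)=x$ throughout the neighborhood, contradicting $f(x)<x$ for $x>0$. Hence there is a well-defined integer $m\ge 2$ and a nonzero real number $b$ such that
\[
f(x)=x-b\,x^m+O(x^{m+1}),
\]
and the inequality $f(x)<x$ for small $x>0$ forces $b>0$.

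Next I would substitute this expansion into the limit of property (3). Writing $f(x)^a=x^a\bigl(1-ab\,x^{m-1}+O(x^m)\bigr)$, a direct computation yields
\[
\frac{x^a-f(x)^a}{x^a f(x)^a}=ab\,x^{\,m-1-a}+O(x^{\,m-a}).
\]
Since $ab>0$, this quantity tends to $+\infty$ if $m-1-a<0$ and to $0$ if $m-1-a>0$; as the prescribed limit $1/k^a$ is a fixed finite positive number, neither is possible, so the exponent must vanish. Therefore $a=m-1$ and the limit equals $ab=(m-1)b=1/k^a$. Because $m\ge 2$, we conclude $a=m-1\ge 1$, and part (ii) of Proposition \ref{prop:efsmain} applies verbatim to give the divergence of $\sum_n x_n$.

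The step I expect to carry the weight of the argument is the exponent-matching in the third paragraph, and with it the appeal to analyticity that makes it legitimate. The whole reduction hinges on $f(x)-x$ possessing a genuine leading monomial of integer order; without a bona fide power-series expansion one could not be sure that the ratio in (3) has a single dominant power of $x$, and the identification $a=m-1$ could break down. Once that identification is secured, the remaining computations are routine and the conclusion is automatic. It is worth noting that the same expansion would also let one derive the asymptotic $x_n\sim C\,n^{-1/(m-1)}$ directly, but routing the argument through Proposition \ref{prop:efsmain} is cleaner and avoids repeating that analysis.
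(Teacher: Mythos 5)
Your proof is correct and follows essentially the same route as the paper: expand $f$ as a power series with leading term $x$ (justified via Proposition \ref{prop:f'(0)}), substitute into the limit of property (3), and match the powers of $x$ to force $a\geq 1$, after which Proposition \ref{prop:efsmain}(ii) gives divergence. If anything, your version is slightly more careful than the paper's, since you explicitly rule out $f(x)\equiv x$ and pin down the sign of the leading coefficient of $x-f(x)$, whereas the paper leaves the existence of a genuine lowest-order term in the numerator implicit in its $\mathcal{O}(x)$ notation.
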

 \begin{proof}
Write 
\begin{equation*}
f(x) = \sum_{k=1}^{\infty} a_k x^k, \qquad a_1=1.
\end{equation*}
We now compute
\begin{eqnarray*}
\lim_{x \to 0} \frac{x^a - f(x)^a}{x^af(x)^a} &=& \lim_{x \to 0} \frac{x^a - (\sum_{k=1}^{\infty} a_kx^k)^a}{x^a(\sum_{k=1}^{\infty} a_kx^k)^a}
\\ &=& \lim_{x \to 0} \frac{x^a - x^a(1 + a_2x + a_3x^2 +\cdots)^a}{x^a (x +a_2 x^2+\cdots)^a}
\\ &=& \lim_{x \to 0} \frac{1 - (1+a_2x + \cdots)^a}{(x +a_2x^2+\cdots)^a}
\\ &=& \lim_{x \to 0} \frac{\mathcal{O}(x)}{x^a(1 +a_2x + ...)^a}
\end{eqnarray*}
which is non-zero and non-infinite only when the lowest power of $x$ in the numerator and that in the denominator are the same.  We conclude that $a \geq 1$ and hence the series generated by $f$ must diverge.	
\end{proof}
Having already imposed the restriction that $f$ be differentiable at zero, further requiring that $f'(0)=1$ limits the scope of Proposition \ref{prop:efsmain}. There are many functions which fail to satisfy this requirement, and yet would generate a convergent series.
\begin{ex} Suppose that $f$ is as in Proposition \ref{prop:f'(0)} with $f'(0)=1$, and that $\displaystyle{\sum  f^n(x_0)< +\infty}$. Set $h(x)=q f(x)$ for $q \in (0,1)$. Clearly $h'(0)=q<1$. Moreover, a simple induction argument shows that (using the notation of (\ref{eq:generalseries}))
\[
h^n(x_0)<q^n f(x_0), \qquad n \geq 1.
\]
Consequently, $\displaystyle{\sum  h^n(x_0) <+\infty}$, but the limit in (3) does not exists for $h$.
\end{ex}
%It is natural to ask at this point whether Proposition \ref{prop:efsmain} can still be used to test for convergence of a series generated by a function $h$ with $h'(0)<1$. 
%\begin{ex} Let $h(x)=\frac{1}{2}x$. Then $h'(0)=1/2$, and we could consider $f(x)= h(2x)$ so that we may apply the test in Proposition \ref{prop:efsmain}. The problem with this strategy is that the limit in (3) does not exist, and hence the result does not apply. On the other hand, it is a quick calculation to check that $h^n(x_0)=\dfrac{x_0}{2^n}$, and hence $\sum_{n=0}^{\infty} h^n(x_0)=2 x_0<+\infty$ for any $x_0 \in (0,+\infty)$.
%\end{ex}
Although there is no obvious way to modify the proposition so that it would apply to functions $f$ with $f'(0)<1$, it turns out that all such functions generate convergent series (as in \ref{eq:generalseries}).
\begin{thm} \label{thm:deriv<1}Suppose that $I$ is an interval containing the origin, that $f:I \to \mathbb{R}$ satisfies $0<f(x)<x$ for $x>0$, and suppose that $f$ is differentiable at $x=0$ with $f'(0)=c$ for some $0\leq c<1$. Then the series in (\ref{eq:generalseries}) is convergent.
\end{thm}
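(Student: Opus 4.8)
The plan is to turn the infinitesimal hypothesis $f'(0)=c<1$ into a genuine geometric contraction on a one-sided neighborhood of the origin, and then to compare the tail of $\sum_n f^n(x_0)$ to a convergent geometric series.

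First I would record that $f(0)=0$: since $0<f(x)<x$ for $x>0$, the squeeze theorem forces $f(x)\to 0$ as $x\to 0^+$, and differentiability (hence continuity) at the origin then gives $f(0)=0$. Consequently the difference quotient satisfies
\[
\lim_{x\to 0^+}\frac{f(x)}{x}=\lim_{x\to 0^+}\frac{f(x)-f(0)}{x-0}=f'(0)=c.
\]
Because $c<1$, I can fix a ratio $r$ with $c<r<1$ and, by the very definition of this limit, a radius $\delta>0$ such that $f(x)<rx$ for all $x\in(0,\delta)$.

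Next I would invoke the monotonicity argument already used in the introduction: the sequence $x_n=f^n(x_0)$ is positive, strictly decreasing, and converges to $0$ (were its limit $L$ positive, continuity would yield the contradiction $f(L)=L<L$). In particular there is an index $N$ with $x_N<\delta$, and since the sequence is decreasing, every later iterate also lies in $(0,\delta)$; thus the contraction estimate applies at each step from $N$ onward and gives, by induction,
\[
x_{N+j}<r^{\,j}x_N,\qquad j\ge 0.
\]
Summing the tail then yields $\sum_{n\ge N}x_n<x_N\sum_{j\ge 0}r^{\,j}=\dfrac{x_N}{1-r}<\infty$, and as the finitely many initial terms contribute only a finite amount, the full series converges.

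I do not expect a serious obstacle; the argument is essentially a comparison with a geometric series once the local contraction is secured. The one point needing a little care is the passage from the pointwise derivative information $f'(0)=c$ to a \emph{uniform} multiplicative bound $f(x)<rx$ valid on an entire interval $(0,\delta)$ — this is precisely where the strict inequality $c<1$ (rather than merely $f(x)<x$) is decisive, and it is exactly the ingredient that Proposition \ref{prop:efsmain} could not furnish. The degenerate case $c=0$ requires no modification, since then any $r\in(0,1)$ serves.
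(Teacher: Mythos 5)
Your proposal is correct and follows essentially the same route as the paper's proof: extract a uniform ratio $r=c+\varepsilon<1$ from the derivative at the origin, locate a tail of the iterate sequence inside the interval where $f(x)<rx$, and dominate that tail by a convergent geometric series. The only cosmetic difference is that you justify the iterates staying in $(0,\delta)$ via monotonicity of the sequence rather than directly from $x_n\to 0$; both are valid.
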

\begin{proof} Since $f'(0)=c$, for all $\varepsilon >0$ there exists a $\delta >0$ such that $|x| < \delta$ implies that $\displaystyle{\left|\dfrac{f(x)}{x}-c\right| <\varepsilon}$. We now select $\varepsilon >0$ small enough so that $\varepsilon + c<1$. The sequence $x_n \to 0$, so there is an $M \in \mathbb{N}$, such that $|x_n|<\delta$ for all $n \geq M$. It follows that for all such $n$, we have
\begin{eqnarray*}
-\varepsilon <& \dfrac{f(x_n)}{x_n}-c&< \varepsilon, \qquad \textrm{or equivalently} \\
x_n(\varepsilon-c) <& f(x_n)&<x_n(\varepsilon+c).
\end{eqnarray*}
We now calculate
\begin{eqnarray*}
\sum_{k=0}^{\infty} f^k(x_0)&=&\sum_{k=0}^{M-1} f^k(x_0)+\sum_{k=M}^{\infty} f^k(x_0)\\
&<&\sum_{k=0}^{M-1} f^k(x_0)+\sum_{k=0}^{\infty} f^k(x_M)\\
&<&\sum_{k=0}^{M-1} f^k(x_0)+\sum_{k=0}^{\infty} (\varepsilon+c)^k \cdot x_M\\
&<& \sum_{k=0}^{M-1} f^k(x_0)+\dfrac{x_M}{1-(\varepsilon+c)}<+\infty,
\end{eqnarray*}
where the inequality $f^k(x_M)<(\varepsilon+c)^k x_M$, $k=0,1,2,\ldots$ can be verified quickly by induction. The proof is complete.
\end{proof}

We thus dealt with all interesting cases when $f'(0)$ exists. For if $f'(0)<0$, the definition of the derivative would yield $f(x)<0$ for $x \ll1$. Similarly, if $f'(0)>1$, then $f(x)>x$ for $x \ll 1$. Both of these cases are ruled out by our hypotheses.
\section{A comparison test} It is natural to wonder whether the following is true: if $f$ generates a convergent series, and $g \leq f$ on $(0,\infty)$, then $g$ generates a convergent series. The difficulty with demonstrating such a fact lies in the dynamics of the problem. Clearly, $g(x_0) \leq f(x_0)$, but it does not follow that $g(g(x_0)) \leq f(f(x_0))$. Thus term-wise comparison of the two generated series is not possible without additional information. However, if one of the two functions in question is (eventually) monotonic, the argument becomes easier.
\begin{thm} \label{thm:monotonemajorant} Suppose that $f:(0,\infty) \to (0, \infty)$ is monotone increasing, and that $\sum_{n=1}^{\infty} f^n(x_0)$ is convergent. If $0<g(x)\leq f(x)<x$ for $x>0$, then $\sum_{n=1}^{\infty} g^n(x_0)$ converges as well.
\end{thm}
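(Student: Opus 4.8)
The plan is to reduce the statement to the ordinary comparison test for series of positive terms by establishing the term-by-term domination $g^n(x_0) \le f^n(x_0)$ for every $n \ge 1$. Since $0 < g(x) \le f(x) < x$ forces both $g$ and $f$ to map $(0,\infty)$ into itself, all iterates $g^n(x_0)$ and $f^n(x_0)$ are well-defined and strictly positive; once the domination is in hand, $\sum_{n=1}^{\infty} g^n(x_0) \le \sum_{n=1}^{\infty} f^n(x_0) < +\infty$ follows at once, and the nonnegativity of the terms makes partial-sum boundedness equivalent to convergence.

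First I would prove $g^n(x_0) \le f^n(x_0)$ by induction on $n$. The base case $n=1$ is exactly the hypothesis $g(x_0) \le f(x_0)$. For the inductive step, assume $g^n(x_0) \le f^n(x_0)$. Writing $g^{n+1}(x_0) = g(g^n(x_0))$ and applying the pointwise inequality $g \le f$ at the point $g^n(x_0)$ gives $g^{n+1}(x_0) \le f(g^n(x_0))$. Then, invoking that $f$ is monotone increasing together with the inductive hypothesis $g^n(x_0) \le f^n(x_0)$, I obtain $f(g^n(x_0)) \le f(f^n(x_0)) = f^{n+1}(x_0)$. Chaining the two inequalities closes the induction.

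The conceptual difficulty is precisely the one flagged in the paragraph preceding the statement: from $g(x_0) \le f(x_0)$ one cannot in general conclude $g(g(x_0)) \le f(f(x_0))$, since the outer map is evaluated at two different arguments and $g$ need not respect the ordering of its inputs. The monotonicity of the majorant $f$ is exactly the ingredient that repairs this gap: once the pointwise bound $g \le f$ has removed $g$ from the outermost position, monotonicity permits replacing the inner argument $g^n(x_0)$ by the larger $f^n(x_0)$ without decreasing the value. I expect no residual obstacle, and I would emphasize that no regularity of $g$ beyond the pointwise inequality is needed, and that monotonicity is required of $f$ alone, not of $g$.
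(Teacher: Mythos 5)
Your proof is correct and follows essentially the same route as the paper: an induction establishing $g^n(x_0)\leq f^n(x_0)$ via the chain $g(g^n(x_0))\leq f(g^n(x_0))\leq f(f^n(x_0))$, using the pointwise bound $g\leq f$ and then the monotonicity of $f$, followed by the ordinary comparison test. The paper writes the same two-step inequality in the reverse order, but the argument is identical.
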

\begin{proof} Let $x_0 \in \mathbb{R}$, let $f$ be monotone increasing and suppose that $0<g(x)\leq f(x)<x$ for $x>0$. It is then clear that $g(x_0) \leq f(x_0)$. Suppose now that $g^k(x_0) <f^k(x_0)$ for some $k \geq 1$.  Then 
\[
f^{k+1}(x_0)=f(f^k(x_0)) \stackrel{(\star)}{\geq} f( g^k(x_0)) \stackrel{(\star \star)}{\geq} g(g^k(x_0))=g^{k+1}(x_0),
\]
where $(\star)$ is a consequence of the monotonicity of $f$, and $(\star \star)$ follows because $g(x) \leq f(x)$ for all $x>0$. By induction we see that $f^n(x_0) \geq g^n(x_0)$ for all $n \in \mathbb{N}$. Whence, by the comparison test $\sum g^n(x_0)$ converges, since $\sum f^n(x_0)$ does.
\end{proof}
\begin{rem} The assumptions in the above theorem can be relaxed to requiring only that $f$ be monotone increasing on the interval $(0, \delta)$ for some $\delta >0$. For if $f$ is monotone on $(0,\delta)$, then there is an $n \in \mathbb{N}$ such that $g^n(x_0) \in (0,\delta)$ for all $n \geq N$. Setting $y_0=g^N(x_0)$ returns us to the setting of the theorem, by which the convergence of $\sum f^n(y_0)$ implies the convergence of $\sum g^n(y_0)$, and hence that of $\sum g^n(x_0)$. 
\end{rem}
With Theorem \ref{thm:monotonemajorant} in hand we now revisit condition (3) in Proposition \ref{prop:efsmain}. If
\[
\lim_{x \to 0} \frac{x^a-f^a(x)}{x^af^a(x)}=\dfrac{1}{k^a}
\]
for some $a,k \in (0,\infty)$, then for $x \ll 1$ the inequality 
\[
\frac{x^a-f^a(x)}{x^af^a(x)} \geq c >0 
\]
holds for some $c>0$. Consequently,
\[
x >\dfrac{x}{\sqrt[a]{1+cx^a}} \geq f(x)  \qquad (x \ll1).
\]
Note that 
\[
\dfrac{d}{dx} \left(\dfrac{x}{\sqrt[a]{1+cx^a}} \right)=(1+c x^a)^{-\frac{1+a}{a}}>0,  \qquad (x >0)
\]
and consequently this function is monotone increasing on $(0,\infty)$. Condition (3) thus implies that $f$ is majorized by a monotone function on an interval $(0,\delta)$, for some $\delta >0$. Since Proposition \ref{prop:efsmain} tells us that in case the limit in (3) exists with $a \geq 1$, the series diverges, it is only interesting to investigate the function $x/\sqrt[a]{1+c x^a}$ for $0<a<1$.
\begin{prop} Suppose $0<a<1$, $x_0>0$, and let 
\[
g(x)=\dfrac{x}{\sqrt[a]{1+c x^a}}
\]
for some $c>0$. Then the series $\sum_{n=1}^{\infty} g^n(x_0)$ converges.
\end{prop}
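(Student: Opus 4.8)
The plan is to sidestep the dynamics entirely by finding a change of variables that linearizes the recursion, from which a closed form for the iterates $x_n = g^n(x_0)$ can be read off directly.

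First I would record the one algebraic identity on which everything hinges. Raising $g(x) = x/\sqrt[a]{1+cx^a}$ to the power $-a$ gives
\[
g(x)^{-a} = \frac{1+cx^a}{x^a} = x^{-a} + c.
\]
In other words, $g$ becomes an honest translation after passing to the coordinate $u = x^{-a}$. Accordingly, writing $x_{n+1}=g(x_n)$ and setting $u_n := x_n^{-a}$, the identity yields $u_{n+1} = u_n + c$, so $\{u_n\}$ is an arithmetic progression with common difference $c$. Hence $u_n = x_0^{-a} + nc$, which unwinds to the closed form
\[
x_n = \bigl(x_0^{-a} + nc\bigr)^{-1/a}, \qquad n \geq 0.
\]

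With the explicit formula in hand, the convergence of $\sum_{n=1}^{\infty} x_n = \sum_{n=1}^{\infty}\bigl(x_0^{-a}+nc\bigr)^{-1/a}$ follows by comparison with a $p$-series. As $n\to\infty$ one has $n^{1/a}x_n = (x_0^{-a}/n + c)^{-1/a}\to c^{-1/a}$, so the terms behave like $c^{-1/a}\,n^{-1/a}$; since $0<a<1$ forces $1/a>1$, the series $\sum n^{-1/a}$ converges, and the limit comparison test delivers the conclusion. As a consistency check, this recovers the value $k=c^{-1/a}$ predicted by part (i) of Proposition \ref{prop:efsmain} together with the relation $1/k^a = c$.

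There is no genuine obstacle in this argument; the only step requiring insight is spotting the substitution $u = x^{-a}$ that turns $g$ into a translation, and once that is noticed the closed form and the $p$-series comparison are routine. A less explicit alternative would be to observe that the ratio $(x^a - g(x)^a)/\bigl(x^a g(x)^a\bigr)$ is in fact identically equal to $c$ for every $x>0$, so that $g$ satisfies hypothesis (3) of Proposition \ref{prop:efsmain} with the given $a\in(0,1)$, and then to invoke part (iii) of that proposition directly; I would favor the closed-form route, however, since it is self-contained and simultaneously exhibits the precise rate of decay of the iterates.
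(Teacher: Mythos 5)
Your argument is correct, and it takes a genuinely different route from the paper. The paper simply verifies hypothesis (3) of Proposition \ref{prop:efsmain}: it computes that the limit of $\bigl(x^a-g(x)^a\bigr)/\bigl(x^ag(x)^a\bigr)$ as $x\to 0$ equals $c>0$, and then invokes part (iii) of that proposition (this is exactly the ``less explicit alternative'' you mention at the end, except that your observation is sharper --- the ratio is \emph{identically} $c$ for all $x>0$, not merely in the limit, which is precisely why your substitution works). Your route instead linearizes the dynamics: the identity $g(x)^{-a}=x^{-a}+c$ turns the recursion into the arithmetic progression $u_{n+1}=u_n+c$ in the coordinate $u=x^{-a}$, yielding the closed form $x_n=(x_0^{-a}+nc)^{-1/a}$ and hence convergence by comparison with $\sum n^{-1/a}$, where $1/a>1$. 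What your approach buys is self-containment (no appeal to the cited Proposition and its nontrivial proof), an exact formula for the iterates, and the precise asymptotic $n^{1/a}x_n\to c^{-1/a}$, which as you note recovers part (i) of Proposition \ref{prop:efsmain} for this family. What the paper's approach buys is brevity and a thematic point: this function is exactly the canonical monotone majorant extracted from condition (3), so verifying that it satisfies its own defining condition is the natural thing to do in context. Both proofs are complete and correct.
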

\begin{rem} This result is established in \cite[p.\,362]{efs} for $c=1$, and $a=\frac{1}{n}, \ n \geq 2$ in Application 3. 
\end{rem}
\begin{proof} Note that for $0<a<1$
\[
\lim_{x \to 0} \dfrac{x^a-\left(\frac{x}{\sqrt[a]{1+c x^a}} \right)^a}{x^a\left(\frac{x}{\sqrt[a]{1+c x^a}} \right)^a}=c>0.
\]
An application of Proposition \ref{prop:efsmain} completes the proof.
\end{proof}
We close this section with a final remark. The assumptions of Theorem \ref{thm:deriv<1} imply that $0<f(x)<c_1x$ on some $(0, \delta)$ and $0<c_1<1$, and hence the recursive series generated by $f$ converges, since the one generated by $g(x)=c_1x$ does. Theorem \ref{thm:monotonemajorant} achieves the same conclusion under much weaker hypotheses. In particular, while Theorem \ref{thm:deriv<1} does not apply to the function $\displaystyle{f(x)=x\left(\frac{1}{2}+\frac{1}{3}\sin\left(\frac{1}{x} \right) \right)}$ (c.f. Example \ref{ex:nondiff}), Theorem \ref{thm:monotonemajorant} allows us to conclude the series generated by this function is in fact convergent, since 
\[
x\left(\frac{1}{2}+\frac{1}{3}\sin\left(\frac{1}{x} \right) \right) \leq \frac{5}{6}x, \qquad (x>0)
\]
and the right hand side generates a convergent series.
\section{Series whose terms are not all of the same sign}
We now widen our scope of investigation to functions $f:\mathbb{R} \to \mathbb{R}$ which satisfy $|f(x)|< |x|$ for all $x \neq 0$ and $f(0)=0$. If the graph of $f$ lies in the first quadrant for $x>0$ (regardless of what it looks like for $x<0$), or if the graph lies in the third quadrant for $x<0$ (regardless of what it looks like for $x>0$), then the analysis of the generated series reduces to the cases we discussed in the previous sections of this note. The next lemma shows that if the graph of $f$ is contained in the second and fourth quadrants, then $f$ will always generate a convergent series.
\begin{lem}
Suppose $f: \mathbb{R} \to \mathbb{R} $ satisfies $|f(x)|<|x|$ for $x \neq 0$ and $f(0)=0$. Suppose in addition that $x f(x)<0 $ for all $x \in \mathbb{R} \setminus \{0\}$. The series defined in (\ref{eq:generalseries}) converges.
\end{lem}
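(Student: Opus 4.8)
The plan is to exploit the sign condition $xf(x)<0$, which makes the iterates $x_n:=f^n(x_0)$ alternate in sign, so that $\sum_n x_n$ is an alternating series and Leibniz's test applies. First I would dispose of the trivial case $x_0=0$, in which every term vanishes, and henceforth assume $x_0\neq 0$. Since $xf(x)<0$ for $x\neq 0$ together with $f(0)=0$ forces $f(x)=0$ if and only if $x=0$, every iterate $x_n$ is nonzero, and the hypothesis $xf(x)<0$ then gives $\operatorname{sgn}(x_{n+1})=-\operatorname{sgn}(x_n)$; thus the signs of the $x_n$ strictly alternate.

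Next I would observe that $|x_{n+1}|=|f(x_n)|<|x_n|$, so that $\{|x_n|\}$ is strictly decreasing and bounded below by $0$, hence convergent to some $L\geq 0$. The heart of the matter is to show $L=0$. Assuming $f$ is continuous (the standing regularity of the paper), take $x_0>0$ without loss of generality; then the even-indexed iterates are positive and decrease to $L$, while $x_{2m+1}=f(x_{2m})$ is negative and tends to $-L$. Letting $m\to\infty$ and using continuity of $f$ at $L$ gives $f(L)=-L$, whence $|f(L)|=|L|$, contradicting $|f(x)|<|x|$ unless $L=0$.

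With $L=0$ in hand, I would write $x_n=\operatorname{sgn}(x_0)\,(-1)^n|x_n|$, where $|x_n|\downarrow 0$ monotonically, and conclude by the alternating series test that $\sum_n x_n$ converges.

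The step $L=0$ is the only real obstacle, and it is precisely where continuity is indispensable: a function prescribed on a single alternating sequence whose absolute values decrease to some $L>0$ would satisfy both $|f(x)|<|x|$ and $xf(x)<0$ yet generate a divergent series. So the conclusion genuinely rests on continuity to rule out a nonzero limit of $|x_n|$, after which the alternating structure does the rest.
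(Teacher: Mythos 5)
Your proof is correct and follows essentially the same route as the paper: the sign condition forces the iterates to alternate, and the alternating series test finishes the argument. The only difference is that you carefully justify $|x_n|\to 0$ via continuity of $f$ (the $f(L)=-L$ contradiction), a step the paper simply asserts; this is a worthwhile detail to make explicit, since the lemma's stated hypotheses do not include continuity and, as you note, the conclusion can fail without it.
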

\begin{proof}
The condition $x f(x)<0$ implies that the terms of the sequence $\{x_n\}$ alternate in sign. Since $|x_n| \to 0$, the alternating series test gives the result.
\end{proof}
Thus it remains to investigate functions $f$ whose graph - over any interval containing zero - intersects all four quadrants. We note that if $f$ is such a function, and $|f(x)|<c|x|$ for some $c<1$ over any interval $(-\epsilon, \epsilon)$, then the series generated by $f$ is absolutely convergent.  \\
If the graph of $f$ over $(-\epsilon, \epsilon)$ intersects all four quadrants for every $\epsilon>0$, then $f$ has infinitely many zeros. Moreover, the sequence of zeros of $f$ have $0$ as a limit point. Consequently either $f$ is differentiable at $x=0$ with $f'(0)=0$, or $f$ is non-differentiable at the origin. In the first case Theorem \ref{thm:deriv<1} \textit{mutatis mutandis} implies that the series $\sum f^n(x_0)$ is absolutely convergent. There is however very little we can say in case $f$ is non-differentiable at $x=0$ and $|f|$ is not majorized by $c|x|$ on any interval $(-\epsilon, \epsilon)$, and any $0 \leq c<1$. By the conditions imposed on $f$ we will always have $\{|x_n| \}_{n=0}^{\infty}$ a decreasing sequence of positive numbers with $\lim |x_n|=0$. It follows that the series $\sum f^n(x_0)=\sum \operatorname{sgn}(f^n(x_0)) |f^n(x_0)|$ will converge if the partial sums of $\sum \operatorname{sgn}(f^n(x_0))$ are bounded (see \cite[Theorem 3.42]{rudin}). Unfortunately we have no easy way to determine when this happens, and even if we did the characterization would not be complete. Thus, the problem remains open, but we propose the following problem for further contemplation by the reader:\\
\textbf{Problem:} Find all functions $g: \mathbb{N} \to \{ 0,1\}$ such that $\sum (-1)^{g(n)}$ has bounded partial sums.

%\begin{acknowledgment}{Acknowledgment.}

%\end{acknowledgment}

\end{document}